\theoremstyle{plain}
\newtheorem{theorem}{Theorem}[section]
\newtheorem{corollary}[theorem]{Corollary}
\newtheorem{fact}[theorem]{Fact}
\newtheorem*{theorem*}{Theorem}
\theoremstyle{definition}
\newtheorem{definition}[theorem]{Definition}
\theoremstyle{remark}
\newtheorem{remark}[theorem]{Remark}
\numberwithin{equation}{section}
\newcommand{\forkindep}[1][]{%
  \mathrel{
    \mathop{
      \vcenter{
        \hbox{\oalign{\noalign{\kern-.3ex}\hfil$\vert$\hfil\cr
              \noalign{\kern-.7ex}
              $\smile$\cr\noalign{\kern-.3ex}}}
      }
    }\displaylimits_{#1}
  }
}
\newcounter{step}                   
    {\hfill $\clubsuit$             
     \vspace{7pt}\par}
\DeclareMathOperator{\ord}{\mathbf{Ord}}
\newcommand{\Vv}{\mathbb{V}}
\newcommand{\Ll}{\mathbb{L}}
\newcommand{\Qq}{\mathbb{Q}}
\DeclareMathOperator{\Th}{Th}
\begin{document}
\title{Saturated Models for the Working Model Theorist}

\begin{abstract}
We put in print a classical result that states that for most purposes, there is no harm in assuming the existence of saturated models in model theory. The presentation is aimed for model theorists with only basic knowledge of axiomatic set theory. 
\end{abstract}
\author{Yatir Halevi}
\address{The Fields Institute for Research in Mathematical Sciences, Toronto, Canada}
\email{ybenarih@fields.utoronto.ca}

\author{Itay Kaplan}
\address{Einstein Institute of Mathematics, Hebrew University of Jerusalem, 91904, Jerusalem
Israel.}
\email{kaplan@math.huji.ac.il}

\thanks{The first author was supported by the Fields Institute for Research in Mathematical Sciences. The second author would like to thank the Israel Science Foundation for their support of this research (grant no. 1254/18).}
\maketitle

\section{Introduction}
Stop a random model theorist on the street and ask him ``are saturated models important/useful?" and his answer will most likely be a strong yes. Nonetheless, the existence of such models (outside the realm of stable theories) will cause some model theorists to move awkwardly in their seats, fearing some set-theoretical obstacles. 

There are several standard workarounds. One is, instead of full saturation, work in some $\kappa$-saturated and $\kappa$-strongly homogeneous model for some large enough cardinal $\kappa$. Another is to consider an increasing chain of highly saturated models, i.e., \emph{special models} and work there instead (see, e.g., \cite[Section 10.4]{hodges}). A third option is to allow your saturated model to be a class instead of a set and work in some suitable conservative expansion of ZFC (see, e.g., \cite[Appendix A]{TZ}).\footnote{One may also consider \emph{resplendent} models and \emph{recursively saturated} models, see \cite[Section 10.6]{hodges}.}

However, we find these workarounds cumbersome, as working with \emph{bona fide} saturated models (as monster models or for checking the completeness of a theory, etc.) is more natural and seamless. There are set theoretic techniques that allow us to do just that.

One such technique is to \emph{force} the continuum hypothesis to hold at some large enough cardinal. Then, after realizing that the statement you wanted to prove was in fact \emph{absolute}, conclude that it holds in your original model of ZFC as well.
Forcing uses some heavy machinery from set theory, and, although this argument is sound, it may deter some model theorists. 

In this short note we give a different argument, whose advantage is that it uses relatively elementary set theory (e.g., no forcing). It is certainly not new nor original: it is standard in axiomatic set theory and is known by many model theorists as well. However, we believe it has never been written down with model theorists in mind.
 
This approach relies on the simple observation that if in the statement one wishes to prove, i.e., a formula $\varphi$ in the language of sets, all quantifiers are bounded, then one can assume an abundance of saturated models. 

More explicitly, let $\Vv$ be a model of ZFC and for an ordinal $\gamma$ let $V_\gamma$ be the $\gamma$-th stage of the von Neumann hierarchy (all notions will be defined in \cref{S:set theory}). 

\begin{theorem*}[\cref{C:how to use}]
For any ordinal $\gamma$ there exists an inner model $M$ satisfying
\begin{enumerate}
\item for some ordinal $\alpha_0$, for every $\alpha\geq \alpha_0$, $M\vDash 2^{\aleph_\alpha}=\aleph_{\alpha+1}$ and
\item for any formula $\varphi$ with parameters in the language of set theory, with parameters and quantifiers bounded by elements of $V_\gamma$, \[\Vv\vDash \varphi\iff M\vDash \varphi.\]
\end{enumerate}
In particular, in $M$ there are arbitrarily large saturated models for any first order theory.
\end{theorem*}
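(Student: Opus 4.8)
The plan is to obtain the inner model $M$ as Gödel's constructible universe $L$, which is the canonical inner model satisfying the Generalized Continuum Hypothesis. First I would recall that $L \vDash \mathrm{ZFC} + \mathrm{GCH}$; in particular $L \vDash 2^{\aleph_\alpha} = \aleph_{\alpha+1}$ for \emph{every} $\alpha$, which gives item (1) with $\alpha_0 = 0$ (indeed with room to spare). The substantive content is item (2): transferring bounded formulas between $\Vv$ and $M$. The natural strategy is an absoluteness argument. Bounded (i.e.\ $\Delta_0$, or more generally suitably absolute) formulas are preserved between a transitive model and the universe, so the goal is to arrange that $L$ contains a transitive set large enough to witness everything bounded by $V_\gamma$, and to carefully track how bounding the quantifiers by $V_\gamma$ interacts with the passage to $L$.

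The key steps, in order, would be as follows. \emph{Step one:} fix the ordinal $\gamma$ and argue that it suffices to control the theory of $\Vv$ below $V_\gamma$. The parameters and bound $V_\gamma$ live in $\Vv$, but $V_\gamma$ itself need not lie in $L$; so I would first replace $\Vv$ by an appropriate inner model in which the relevant fragment of $V_\gamma$ is captured. \emph{Step two:} use the machinery developed earlier in \cref{S:set theory} — presumably a relativized construction or a condensation/reflection lemma — to produce an inner model $M \supseteq L$-style object whose $V_\gamma^M$ agrees with $V_\gamma^\Vv$ on $\Delta_0$ facts, so that for any $\Delta_0$ formula $\varphi$ with parameters in $V_\gamma$ one has $\Vv \vDash \varphi \iff V_\gamma \vDash \varphi \iff M \vDash \varphi$ by absoluteness of bounded formulas for transitive classes. \emph{Step three:} verify the GCH clause holds in this $M$, either because $M$ is (an $L$-like) inner model satisfying GCH outright, or because the construction forces $2^{\aleph_\alpha} = \aleph_{\alpha+1}$ from some $\alpha_0$ onward. \emph{Step four:} derive the final sentence. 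The existence of saturated models of size $\kappa$ for a countable (or small) theory follows whenever $\kappa^{<\kappa} = \kappa$; under GCH every successor cardinal $\kappa = \lambda^+$ with $\lambda$ regular satisfies this, so arbitrarily large such $\kappa$ exist, yielding arbitrarily large saturated models in $M$. I would phrase this as a clean corollary of the two listed properties.

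I would expect the main obstacle to be \emph{Step two}: reconciling the requirement that $M$ be an inner model (hence a transitive class containing all ordinals, closed under the Gödel operations and definability) with the requirement that $M$ agree with $\Vv$ on all bounded statements about $V_\gamma$. The tension is that $L$ may be far smaller than $\Vv$ — for instance $V_\gamma^L$ can differ drastically from $V_\gamma^\Vv$ when $\Vv$ has large cardinals or extra reals — so one cannot simply take $M = L$ and hope $V_\gamma$ is preserved. The resolution, which I anticipate the paper carries out in \cref{S:set theory}, is to build $M$ not as $L$ but as $L[A]$ or a similar relativized constructible model using $A = V_\gamma$ (or its transitive closure coded as a set) as a predicate: this guarantees $V_\gamma \in M$ and hence bounded truth about $V_\gamma$ is literally inherited, while the constructibility of $M$ over $A$ still yields GCH above the rank of $A$, explaining why item (1) is stated only for $\alpha \geq \alpha_0$ rather than for all $\alpha$. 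The delicate points are checking that $\Delta_0$-absoluteness genuinely covers \emph{all} formulas whose quantifiers are bounded by elements of $V_\gamma$ (so that unbounded set-theoretic complexity never enters), and verifying the threshold $\alpha_0$ at which GCH kicks in lies above the cardinality of the coding set for $V_\gamma$.
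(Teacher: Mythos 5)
Your overall architecture agrees with the paper's: reject $M=L$, pass to a relativized constructible universe, get clause (2) from the absoluteness chain $\Vv\vDash\varphi\iff V_\gamma\vDash\varphi\iff M\vDash\varphi$ for transitive classes, get clause (1) from eventual GCH in models of the form $\Ll[A]$, and get saturated models from eventual GCH. But there is a genuine gap at the single point where all the work lies: your claim that taking $A=V_\gamma$ as the predicate in the $\Ll[A]$-construction ``guarantees $V_\gamma\in M$'' is false. The construction of $\Ll[A]$ only consults the trace $A\cap L_\beta[A]$ at each stage, and in general one gets $A\cap\Ll[A]\in\Ll[A]$, not $A\in\Ll[A]$, nor even $A\subseteq\Ll[A]$. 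Worse, for your specific choice the predicate is vacuous: membership in $V_\gamma$ is a rank condition, and rank is definable in each $(L_\beta[A],\in)$, so $\Ll[V_\gamma]=\Ll[\emptyset]=L$ --- exactly the model you had already (correctly) rejected. The alternative $L(V_\gamma)$, which does contain $V_\gamma$ as an element, can fail choice and GCH, so it does not serve either. Your parenthetical ``or its transitive closure coded as a set'' points in the right direction, but the coding and, crucially, the decoding are the actual content of the proof, and they are absent.

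The paper closes this gap as follows. Fix a bijection $f:\mu\to V_\gamma$ with $\mu$ a cardinal, let $R=f^{-1}(\mathord{\in}\restriction V_\gamma^2)\subseteq\mu\times\mu$, and set $M=\Ll[R]$. Because $R$ is a set of pairs of ordinals, all of its elements lie in $M$; choosing $\beta$ with $\mu\times\mu\in L_\beta[R]$ gives $R\subseteq L_\beta[R]$, hence $R\in L_{\beta+1}[R]\subseteq M$ by the definition of the hierarchy --- this is precisely where a predicate consisting of ordinals (or pairs of ordinals) behaves better than the arbitrary predicate $V_\gamma$. One must then decode $V_\gamma$ from $R$ inside $M$: apply Mostowski's collapse to $(\mu,R)$ \emph{in} $M$ (the hypotheses transfer by transitivity), and invoke the uniqueness of the collapse in $\Vv$, where it is witnessed by $(V_\gamma,f)$, to conclude that the collapse computed in $M$ is literally $V_\gamma$; hence $V_\gamma\in M$ (this is \cref{T:existence of the inner model}). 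Once $V_\gamma\in M$ is secured, your remaining steps --- the two-sided absoluteness argument for $V_\gamma$-bounded formulas, eventual GCH for $\Ll[R]$, and the existence of arbitrarily large saturated models from $\lambda^{<\lambda}=\lambda$ --- go through essentially as you describe and as in \cref{C:how to use}.
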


In \cref{S:set theory} we give the necessary background from set theory and prove the existence of such inner models. In \cref{S:model theory} we give some model theoretic examples and applications. 

\subsubsection*{Acknowledgments}
We thank Ehud Hrushovski for making us aware of this method at some point in the many classes he taught us. The second author thanks Yair Hayut and Asaf Karagila for outlining the argument several years ago. We thank Anand Pillay and Christian d\textquoteright Elb\'ee for encouraging us to write it down. We further thank Asaf Karagila and David Meretzky for their comments on a preliminary version.

The work on this note was carried out during the 2021 Thematic Program on Trends in Pure and Applied Model Theory at the Fields Institute. We thank the Fields Institute for their hospitality.

\section{Set Theory}\label{S:set theory}
We assume basic set theory, such as ordinals, cardinals and the axioms of ZFC.  Almost all references will be from Jech's ``Set Theory" \cite{jech}.

We work in some model $(\Vv,\in)$ of ZFC. All sets are elements of $\Vv$ and definable sets are \emph{classes}. Every set is identified with the class of its members, and a class which cannot be identified with a set is called a \emph{proper} class. We let $\ord$ denote the class of all ordinals (recall that $\mathord{\in} \restriction \ord^2$ is also denoted by $<$, it is a well-order); it is a proper class.

The universe $\Vv$ can be partitioned by the so-called \emph{von Neumann hierarchy of sets}: $V_0=\emptyset$, for every $\beta\in \ord$ if $\beta$ is a limit ordinal $V_\beta=\bigcup_{\alpha<\beta} V_\alpha$ and if $\beta=\alpha+1$ then $V_\beta=P(V_\alpha)$. Each $V_\alpha$ is a set, but $\Vv$ itself is a proper class. In fact, it follows from the axioms that $\Vv=\bigcup_{\alpha\in\ord} V_\alpha$.

An \emph{inner model} of ZFC is a class $M$ such that $(M,\mathord{\in})\vDash \text{ZFC}$, $M$ is transitive ($x \in y \in M$ implies $x\in M$), and $M$ contains all ordinals. 


An important family of inner models are the \emph{universes of sets constructible relative to a set}: for any set $A$, we let
\[L_0[A]=\emptyset,\, L_\beta=\bigcup_{\alpha<\beta} L_\beta[A] \text{ if $\beta$ is a limit ordinal and}\]
\[L_{\beta+1}[A]=\{X\subseteq L_\beta[A]: X \text{ is definable in $(L_\beta[A],\mathord{\in}, A\cap L_\beta[A])$}\},\]
\[\Ll [A]=\bigcup_{\beta\in \ord}L_\beta[A],\]
where $(L_\beta[A],\in, A\cap L_\beta[A])$ is the structure $(L_\beta[A],\in)$ with a predicate for $A\cap L_\beta[A]$.
Each $L_\beta[A]$ is a set, but $\Ll[A]$ itself is a proper class. It is an inner model of ZFC and it satisfies the continuum hypothesis for large enough cardinals, i.e., there exists $\alpha_0$ such that for all $\alpha\geq \alpha_0$, $\Ll [A]\vDash 2^{\aleph_\alpha}=\aleph_{\alpha +1}$, see \cite[Theorem 13.22]{jech} and \cite[page 103, 2B]{keith} for more.


The last tool we will need is the following:
\begin{fact}[Mostowski's Collapse]\cite[Theorem 6.15]{jech}
Assume that $P$ is a class and $E$ is a binary relation on $P$ satisfying
\begin{enumerate}
\item $E$ is well-founded:
\begin{enumerate}
\item any nonempty set $x\subseteq P$, has an $E$-minimal element: an element $z\in x$ such that for no $y \in x$ is it the case that $y \mathrel{E} z$;
\item for any $x\in P$, $\{z\in P: z\mathrel{E} x\}$ is a set
\end{enumerate}
\item for any distinct $x,y \in P$, $\{z\in P: z\mathrel{E} x\}\neq \{z\in P: z\mathrel{E} y\}$.
\end{enumerate}
Then there is a transitive class $N$ and an isomorphism $\pi$ between $(P,E)$ and $(N,\in)$. Moreover, $N$ and $\pi$ are unique.
\end{fact}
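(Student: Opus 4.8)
The plan is to construct $\pi$ by recursion along the well-founded relation $E$ and then verify the required properties one at a time. Concretely, I would define
\[
\pi(x) = \{\pi(y) : y \mathrel{E} x\} \quad \text{for all } x \in P,
\]
and set $N = \{\pi(x) : x \in P\}$. The guiding intuition is that $\pi$ should send each $x$ to the set whose members are exactly the images of the $E$-predecessors of $x$, which is precisely what the membership relation demands of a transitive target.

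The first and main step is to justify that this recursion defines a genuine (definable) class function on all of $P$. This is where both parts of hypothesis~(1) enter: condition 1(a) (well-foundedness) licenses proofs and definitions by $E$-induction, while condition 1(b) (set-likeness, i.e.\ each $\{z : z \mathrel{E} x\}$ is a set) guarantees that every approximation to $\pi$ is set-sized and that the recursion does not run off into a proper class at any stage. Formally I would invoke the recursion theorem for well-founded, set-like relations: for each $x$ one shows there is a unique function on the transitive $E$-closure of $x$ obeying the displayed clause, and these cohere into a single class function $\pi$. I expect this to be the technical crux, precisely because $P$ may be a proper class and the familiar ordinal-indexed recursion must be replaced by recursion along $E$; set-likeness is exactly the hypothesis that makes this replacement legitimate.

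Granting $\pi$, the remaining verifications are short inductions. Transitivity of $N$ is immediate: if $a \in \pi(x)$ then $a = \pi(y)$ for some $y \mathrel{E} x$, so $a \in N$. That $\pi$ preserves the relation, $x \mathrel{E} y \Rightarrow \pi(x) \in \pi(y)$, is immediate from the definition. Injectivity is the one place condition~(2) (extensionality of $E$) is used: arguing by $E$-induction on $x$, if $\pi(x) = \pi(y)$ then for every $z \mathrel{E} x$ we have $\pi(z) \in \pi(y)$, so $\pi(z) = \pi(w)$ for some $w \mathrel{E} y$, and the induction hypothesis gives $z = w$, whence $z \mathrel{E} y$; by symmetry the $E$-predecessors of $x$ and $y$ coincide, so extensionality forces $x = y$. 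Injectivity in turn yields that $\pi$ reflects the relation ($\pi(x) \in \pi(y) \Rightarrow x \mathrel{E} y$), so $\pi$ is an isomorphism of $(P,E)$ onto $(N,\in)$.

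Finally, for uniqueness I would first observe that any isomorphism $\pi'$ from $(P,E)$ onto a transitive class $(N',\in)$ automatically satisfies the same recursive clause: since $N'$ is transitive, every element of $\pi'(x)$ lies in $N'$, hence equals $\pi'(w)$ for a unique $w$, and the isomorphism property gives $\pi'(w) \in \pi'(x) \iff w \mathrel{E} x$, so $\pi'(x) = \{\pi'(w) : w \mathrel{E} x\}$. A straightforward $E$-induction then shows $\pi' = \pi$, and consequently $N' = \pi'[P] = \pi[P] = N$. No step here is delicate; the entire weight of the argument rests on setting up the well-founded recursion in the first step.
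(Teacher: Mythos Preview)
The paper does not prove this statement; it is recorded as a Fact with a citation to \cite[Theorem~6.15]{jech} and used as a black box in the proof of \cref{T:existence of the inner model}. Your proposal is correct and is exactly the standard argument found in Jech: define $\pi$ by recursion along the well-founded set-like relation $E$ via $\pi(x)=\{\pi(y): y\mathrel{E} x\}$, then check transitivity of $N=\pi[P]$, injectivity of $\pi$ by $E$-induction using extensionality (hypothesis~(2)), and uniqueness by observing that any isomorphism onto a transitive class must satisfy the same recursion.
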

By the axioms of ZFC, applying Mostowski's collapse to a set produces a transitive set.

\begin{theorem}\label{T:existence of the inner model}
For any ordinal $\gamma$ there exists an inner model $M\vDash$ ZFC satisfying 
\begin{enumerate}
\item $V_\gamma\in M$ and 
\item there exists $\alpha_0$ such that for all $\alpha\geq \alpha_0$, $M\vDash 2^{\aleph_\alpha}=\aleph_{\alpha+1}$.
\end{enumerate}
\end{theorem}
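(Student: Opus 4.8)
The plan is to take $M := \Ll[A]$ for a cleverly chosen set $A$, so that condition (2) comes essentially for free and all the work goes into condition (1). Indeed, for \emph{any} set $A$ the class $\Ll[A]$ is an inner model of ZFC in which $2^{\aleph_\alpha}=\aleph_{\alpha+1}$ holds for all sufficiently large $\alpha$, as recalled in the discussion of $\Ll[A]$ above; this is exactly condition (2) for a suitable $\alpha_0$. The remaining issue is to arrange $V_\gamma\in \Ll[A]$. The naive choice $A=V_\gamma$ does not obviously work, since $\Ll[A]$ only ever sees $A$ through the predicates $A\cap L_\beta[A]$, and there is no reason for the (possibly highly non-constructible) elements of $V_\gamma$ to land in $\Ll[V_\gamma]$. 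The trick is instead to let $A$ be a \emph{set of ordinals coding} $V_\gamma$: any set of ordinals trivially belongs to $\Ll[A]$, and one then decodes $V_\gamma$ back inside $\Ll[A]$.

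Concretely, I would first pass to the transitive closure $t:=\operatorname{trcl}(\{V_\gamma\})$, fix (using AC in $\Vv$) a bijection $f\colon \delta\to t$ where $\delta:=|t|$, and transport membership back to $\delta$ by setting $E:=\{(\alpha,\beta)\in\delta\times\delta : f(\alpha)\in f(\beta)\}$. Composing with the Gödel pairing function $\ord\times\ord\to\ord$ turns $E$ into a set of ordinals $A$. Now put $M:=\Ll[A]$. Since $A$ consists of ordinals and $\ord\subseteq M$, we have $A\in M$, and both $\delta\in M$ and the relation $E$ (being definable from $A$) are recoverable inside $M$.

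It then remains to reconstruct $V_\gamma$ from $A$ within $M$. By construction $f$ witnesses $(\delta,E)\cong (t,\in)$, so $(\delta,E)$ is well-founded, extensional, and set-like. Well-foundedness is downward absolute (any counterexample in $M$ is a counterexample in $\Vv$) and extensionality is first order, so these hypotheses still hold as computed inside $M$. Applying Mostowski's collapse \emph{inside} $M$ — legitimate since $M\vDash$ ZFC — yields a transitive set $N\in M$ with $(\delta,E)\cong(N,\in)$. The crucial point is that the collapsing map is given by the $\in$-recursion $\pi(\alpha)=\{\pi(\beta):\beta\mathrel{E}\alpha\}$, which is absolute between the transitive models $M$ and $\Vv$; hence the $N$ and $\pi$ produced inside $M$ coincide with those produced in $\Vv$. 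But in $\Vv$ the collapse of $(\delta,E)\cong(t,\in)$ is, by uniqueness and since $t$ is already transitive, the set $t$ itself. Therefore $N=t=\operatorname{trcl}(\{V_\gamma\})$; as $N$ is transitive with $N\in M$ and $M$ is transitive, we get $t\subseteq M$, and in particular $V_\gamma\in t\subseteq M$, establishing (1).

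The main obstacle, and really the heart of the matter, is this final absoluteness step: one must be certain that the Mostowski collapse performed inside the inner model $M$ returns the \emph{genuine} $V_\gamma$ rather than some internal surrogate. This is exactly what the uniqueness clause of Mostowski's collapse, combined with the absoluteness of $\in$-recursive definitions across transitive models of ZF, provides. Everything else — that $\Ll[A]$ is an inner model with the desired continuum behaviour, and that $V_\gamma$ can be coded as a set of ordinals — is routine.
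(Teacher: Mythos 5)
Your proof is correct and takes essentially the same route as the paper's: both code $V_\gamma$ by a well-founded extensional relation on a set of ordinals, take $\Ll[\cdot]$ relative to that code (so that condition (2) is automatic), verify the code itself lies in the inner model, and then recover $V_\gamma$ by performing Mostowski's collapse inside $M$ and invoking uniqueness of the collapse in $\Vv$. The only cosmetic differences are that the paper collapses $V_\gamma$ directly (it is already transitive, so passing to $\operatorname{trcl}(\{V_\gamma\})$ is unnecessary) and keeps the code as a relation $R\subseteq\mu\times\mu$ rather than Gödel-pairing it into a set of ordinals, justifying $R\in\Ll[R]$ by a short argument instead of the standard fact about sets of ordinals.
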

\begin{remark}
By \cite[Theorem 13.27.i]{jech}, $V_\alpha^M=V_\alpha$ for any $\alpha\leq \gamma$, where $V_\alpha^M$ is the $\alpha$-th stage of the hierarchy as defined in $M$.
\end{remark}
\begin{proof}
Let $f:\mu\to V_\gamma$ be a bijection between $V_\gamma$ and some cardinal $\mu$ and let $R=f^{-1}(\mathord{\in}\restriction V_\gamma^2)\subseteq \mu\times \mu$. Note that $R$ is a set (as a subclass of a set).

Let $M=\Ll[R]$ be the constructible universe relative to $R$. It is an inner model of ZFC and satisfies (2), so we need to show that it satisfies (1) as well.

Since $M$ is an inner model, not only $\mu\in M$ but also $\mu\times \mu\in M$. Note that the construction of the product $\mu\times\mu$ is exactly the same in $M$ and in $\Vv$. Assume that $\mu\times\mu \in L_\beta[R]$ for some $\beta\in\ord$. By transitivity $R\subseteq L_{\beta}[R]$, which means, by definition, that $R\in L_{\beta+1}[R]\subseteq M$. 


Consider the structure $(\mu,R)$. It clearly satisfies the requirements of Mostowski's collapse in $\Vv$ since $f(R)=\mathord{\in}$, and hence also in $M$ (by transitivity). By uniqueness, in $\Vv$, the collapse is witnessed by $V_\gamma$ and $f$. 

Applying Mostowski's collapse in $M$, we get a pair $\pi$ and $(N,\in)$. By transitivity, $\pi$ is an isomorphism from $(\mu,R)$ to $(N,\in)$ also in $\Vv$. Uniqueness (in $\Vv$) implies that $\pi=f$ and $N=V_\gamma$, guaranteeing that $V_\gamma\in M$.
\end{proof}

\begin{definition}
For an ordinal $\gamma$, a formula over $V_\gamma$ is \emph{bounded by $V_\gamma$} (or \emph{$V_\gamma$-bounded}) if all its quantifiers are of the form $(\operatorname{Q} x\in a)$ where $\operatorname{Q} \in \{\forall,\exists\}$ and $a\in V_\gamma$.
\end{definition}

\begin{corollary}\label{C:how to use}
For any ordinal $\gamma$ there exists an inner model $M\vDash$ ZFC such that $V_\gamma\in M$ and:
\begin{enumerate}
\item there exists $\alpha_0$ such that for all $\alpha\geq \alpha_0$, $M\vDash 2^{\aleph_\alpha}=\aleph_{\alpha+1}$ and
\item for any $V_\gamma$-bounded formula $\varphi$, $\Vv\vDash\varphi$ if and only if $M\vDash\varphi$.
\end{enumerate}
In particular, in $M$ there are arbitrarily large saturated models for any first order theory.
\end{corollary}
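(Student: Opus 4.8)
The plan is to let $M$ be exactly the inner model produced by \cref{T:existence of the inner model}, applied to the given $\gamma$. That theorem hands us $V_\gamma\in M$ together with item (1) for free, so the only genuine work is to verify item (2); the ``in particular'' clause will then be extracted from item (1) alone, by a classical existence theorem applied inside $M$. Throughout I will lean on the remark following \cref{T:existence of the inner model}, namely $V_\gamma^M=V_\gamma$, which says that $\Vv$ and $M$ agree on the whole structure $(V_\gamma,\mathord{\in})$.

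To prove (2) I would induct on the structure of the $V_\gamma$-bounded formula $\varphi$, taking parameters to range over $V_\gamma$. For an atomic formula ($x\in y$ or $x=y$) with arguments in $V_\gamma$, absoluteness is immediate: since $M$ is transitive and $V_\gamma\subseteq M$, the membership relation restricted to $V_\gamma$ is read the same way in $\Vv$ and in $M$, and likewise for equality. The propositional connectives are trivial, as the satisfaction relation commutes with $\neg,\wedge,\vee$ in any model.

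The heart of the induction, and the step I expect to be the main obstacle to phrase cleanly, is the bounded quantifier case $(\exists x\in a)\,\psi(x)$ with $a\in V_\gamma$. Here I would use the transitivity of $V_\gamma$ to observe that $a\subseteq V_\gamma$, so every candidate witness already lies in $V_\gamma\subseteq M$; and since $a$ is literally the same set in $\Vv$ and in $M$ (because $V_\gamma^M=V_\gamma$ and $\in$ is absolute on $V_\gamma$), the quantifier ranges over exactly the same collection of elements in both models. For each witness $x\in a$, substituting it yields a new parameter still lying in $V_\gamma$, so $\psi(x)$ remains $V_\gamma$-bounded and the induction hypothesis gives $\Vv\vDash\psi(x)\iff M\vDash\psi(x)$; the two sides of the equivalence therefore agree, and the universal case is dual. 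The one delicate point is to check that the quantifier genuinely ``does not escape'' $V_\gamma$ in either direction, which is precisely what transitivity of $V_\gamma$ and the equality $V_\gamma^M=V_\gamma$ guarantee.

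Finally, for the ``in particular'' clause I would reason entirely inside $M$. By item (1) there is $\alpha_0$ with $M\vDash 2^{\aleph_\alpha}=\aleph_{\alpha+1}$ for all $\alpha\geq\alpha_0$, so in $M$ every successor cardinal $\kappa=\aleph_{\beta+1}$ with $\beta\geq\alpha_0$ is regular and satisfies $\kappa^{<\kappa}=\kappa$ (indeed $\kappa^{<\kappa}=\kappa^{\aleph_\beta}=(\aleph_\beta^+)^{\aleph_\beta}=\aleph_\beta^+=\kappa$ under the relevant instance of GCH). By the classical model-theoretic existence theorem for saturated models (see, e.g., \cite{hodges}), any complete first-order theory $T$ with $|T|<\kappa$ then has a saturated model of cardinality $\kappa$ in $M$. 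Since arbitrarily large such successor cardinals $\kappa$ exist, $M$ contains saturated models of $T$ of arbitrarily large size, as required.
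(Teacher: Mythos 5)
Your proposal is correct and takes essentially the same approach as the paper: $M$ is the inner model of \cref{T:existence of the inner model}, item (2) is the standard absoluteness of bounded formulas for transitive classes (the paper routes the equivalence through $V_\gamma$, i.e., $\Vv\vDash\varphi \iff V_\gamma\vDash\varphi \iff M\vDash\varphi$, citing \cite[Lemma 12.9]{jech}, whereas you carry out the same induction directly between $\Vv$ and $M$), and the saturated models come from the GCH-above-$\alpha_0$ via the classical existence theorem, for which the paper cites \cite[Lemma 6.1.2]{TZ} and you reproduce the cardinal arithmetic $\kappa^{<\kappa}=\kappa$. The differences are only in how much is spelled out versus cited.
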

\begin{proof}
Follows from \cref{T:existence of the inner model} since $M$ and $V_\gamma$ are transitive: $\Vv\vDash\varphi$ if and only if $V_\gamma \vDash \varphi$ if and only if $M\vDash\varphi$. (These equivalences follow from an easy inductive argument, this is \emph{absoluteness} for bounded formulas \cite[Lemma 12.9]{jech}.)

The existence of saturated models follows by, e.g., \cite[Lemma 6.1.2]{TZ}.
\end{proof}

\section{Model Theory}\label{S:model theory}
The aim of this section is to give some examples of applications of \cref{C:how to use} to ``real-life'' model-theoretic statements. One can summarize it in one slogan:
\begin{quotation}
\emph{If the statement you wish to prove is equivalent to a $V_\gamma$-bounded formula (with parameters) in any inner model containing $V_\gamma$ then you may assume that you have arbitrarily large saturated models.}
\end{quotation}

\begin{remark}
\begin{enumerate}
\item The point that the equivalence is with respect to any inner model containing $V_\gamma$ is crucial because of the reflection principle (\cite[Theorem 12.14]{jech}) which says that for every formula with parameters $\varphi$ there is an ordinal $\gamma$ and a $V_\gamma$-bounded formula $\psi$ such that $\Vv\vDash \varphi\leftrightarrow \psi$ (see \cite[Theorem 12.14]{jech}). 
\item Assuming large cardinals, there are models of ZFC in which e.g., $\Th(\Qq,<)$ has no uncountable saturated models, see the non-example below.
\end{enumerate}
\end{remark}

Many theorems in model theory fall into this category, and hence we may assume the existence of saturated models, and do not have to worry about the existence of monster models. We give two examples and one non-example.

\subsubsection*{{Completeness}} \label{sss:complete}
One way to show that a theory $T$ in a language $\mathcal{L}$ is complete is by showing that any two saturated models of the same cardinality are isomorphic (assuming one exists). 

We can easily write down a formula in the language of set theory saying that the theory $T$ is complete. Pick an ordinal $\gamma$ large enough such that $V_\gamma$ contains, among other things, $T$, $\mathcal{L}$, the set of all $\mathcal{L}$-formulas and the set of all finite sequences of formulas (and in particular all deductions). Clearly the formula ``$T$ is complete'' is equivalent to a $V_{\gamma}$-bounded formula (and this remains true in any inner model containing $V_\gamma$). By \cref{C:how to use}, in order to prove that $T$ is complete we may assume that we have a saturated model for $T$.

Another approach to writing ``$T$ is complete" in a formula is: for any model $M\vDash T$ whose universe is the cardinal $|T|$ if $M\vDash \phi$ then $\phi\in T$. This is also easily seen to be a $V_\gamma$-bounded for some $\gamma$.

As a consequence, when showing quantifier elimination (i.e., substructure completeness) and model completeness one may assume the existence of saturated models.

\subsubsection*{{Stably Embedded Definable Sets}}
Let $T$ be a theory. Recall the definition of a stably embedded definable set (see, e.g., \cite[Appendix]{stabemb}): a formula $\xi(x)$ is stably embedded if for any formula $\varphi(x,y)$ there exists a formula $\psi(x,y)$ such that for any $y$-tuple $b$ there exists a $z$-tuple $d$ satisfying $\xi$ such $\forall x (\xi(x)\to (\varphi(x,b)\leftrightarrow \psi(x,d)))$. Again, we can easily express this as a formula in the language of set theory with $V_\gamma$-bounded quantifiers for some sufficiently large $\gamma$.

If $M\vDash T$ is saturated then by \cite[Lemma 1, Appendix]{stabemb}, a definable set $\xi(M)$ is stably embedded if and only if every automorphism of $\xi(M)$ (with the induced structure) extends to an automorphism of $M$. This is often a very convenient criterion to check.

\subsubsection*{{Non-Example}}
Assuming large cardinals, there is a model of ZFC and a theory $T$ for which there is no ordinal $\gamma$ such that the statement ``$T$ has an uncountable saturated model'' is equivalent to a $V_\gamma$-bounded formula in all inner models. 

Indeed, by \cite{genfails} there are models of ZFC in which the generalized continuum hypothesis fails for every cardinal (assuming a supercompact cardinal). By restricting to $V_\kappa$ where $\kappa$ is the smallest inaccessible cardinal, we may assume that there are no inaccessible cardinals as well (see \cite[Lemma 12.13]{jech}). In such a model, any non-stable theory $T$ will not have any uncountable saturated models: if $T$ had an uncountable saturated model of size $\lambda$, then by \cite[Theorem VIII.4.7]{classification}, $\lambda=\lambda^{<\lambda}$, and hence $\lambda$ is regular. As $\lambda$ is uncountable and not inaccessible, it is a successor $\mu^+$. Together, we get $2^\mu \leq \lambda^\mu = \lambda = \mu^+ \leq 2^\mu$, a contradiction. Consequently, if ``$T$ has an uncountable saturated model'' was equivalent to a $V_\gamma$-bounded formula (in all inner models) it would contradict \cref{C:how to use}.
%
\bibliographystyle{alpha}
\bibliography{saturated}

\end{document}